 \newtheorem{thm}{Theorem}[section]
 \newtheorem{lem}[thm]{Lemma}
 \newtheorem{prop}[thm]{Proposition}
 \theoremstyle{definition}
 \theoremstyle{remark}
 \numberwithin{equation}{section}
\begin{document}

%
%
%
%
%
%
%
%
%

\title[Global semianalytic sets defined by definable functions]
 {Closure and Connected Component of a Planar Global Semianalytic Set Defined by Analytic Functions Definable in O-minimal Structure}

\author[M. Fujita]{Masato Fujita}

\address{%
Department of Liberal Arts,
Japan Coast Guard Academy,
5-1 Wakaba-cho, Kure, Hiroshima 737-8512, Japan}

\email{masato@jcga.ac.jp}

\subjclass[2010]{Primary 03C64; Secondary 14P15}

\keywords{o-minimal structure; global semianalytic set}

\date{}

\begin{abstract}
We consider a global semianalytic set defined by real analytic functions definable in an o-minimal structure. When the o-minimal structure is polynomially bounded, we show that the closure of this set is a global semianalytic set defined by definable real analytic functions. We also demonstrate that a connected component of a planar global semianalytic set defined by real analytic functions definable in a substructure of the restricted analytic field is a global semianalytic set defined by definable real analytic functions.
\end{abstract}

\maketitle

\section{Introduction}\label{intro}

A semianalytic set is defined as a subset of a real analytic manifold that is, at each point of the manifold, a finite union of sets defined by finite inequalities and equalities of real analytic functions defined on a neighborhood of the point. \L ojasiewicz \cite{Lojasiewicz} proved that the closure and a connected component of a semianalytic set are also semianalytic. A subset of a real analytic manifold is called global semianalytic if it is a finite union of sets defined by finite inequalities and equalities of real analytic functions defined on the whole manifold. It is not yet known whether a connected component of a global semianalytic set is itself global semianalytic, except in some specific cases \cite{AC, F}. 

We want to consider this problem in the o-minimal setting. Consider an o-minimal expansion $\Tilde{\mathbb R}$ of the real field. (See \cite{vdD} for the theory of o-minimal structures.) A global semianalytic set defined by real analytic functions definable in the o-minimal structure $\Tilde{\mathbb R}$ takes the form:
\begin{center}
$\displaystyle \bigcup_{i=1}^k \{x \in \mathbb R^n\;|\;f_i(x)=0,g_{i1}(x)>0, \ldots, g_{il}(x)>0 \}$, 
\end{center}
where $f_i$ and $g_{ij}$ are definable real analytic functions on $\mathbb R^n$. In this paper, we refer to such sets as being \textit{globally $\Tilde{\mathbb R}$-definable semianalytic} or \textit{globally definable semianalytic}. A global semianalytic set that is simultaneously definable in an o-minimal structure is not necessarily a globally definable semianalytic set, an example of which is shown below.

Consider the restricted analytic field $\mathbb R_{\text{an}}$. A subset of $\mathbb R^n$ is definable in $\mathbb R_{\text{an}}$ if and only if its closure in the $n$-dimensional projective space $P^n(\mathbb R)$ is subanalytic. (See \cite{BM} for the definition of a subanalytic set.) Here, we identify $\mathbb R^n$ with an open subset of $P^n(\mathbb R)$ under the identification given by 
\begin{center}
$\mathbb R^n \ni (x_1, \ldots, x_n) \mapsto (1:x_1: \ldots:x_n) \in P^n(\mathbb R)$. 
\end{center}
Set 
$X=\{(t,\sin(t))\;|\;0 \leq t \leq \pi\}$.
Then, $X$ is a compact global semianalytic set; furthermore, $X$ is definable in $\mathbb R_{\text{an}}$. However, its analytic closure is the sine curve that intersects with the real line $\{(t,0); t \in \mathbb R\}$ at infinitely many points. The analytic closure of a globally definable semianalytic set intersects with the real line at a finite number of points. The set $X$ is simultaneously definable in $\mathbb R_{\text{an}}$ and global semianalytic, but it is not a globally definable semianalytic set.

In this paper, o-minimal structure indicates an o-minimal expansion of the real field. We show that the closure of a globally definable semianalytic set is a globally definable semianalytic set in the case where the o-minimal structure is polynomially bounded. (See \cite{Miller2, vdDM3} for the definition of polynomially bounded o-minimal structures.) We also show that a connected component of a planar globally definable semianalytic set is again a globally definable semianalytic set when the o-minimal structure is a substructure of the restricted analytic field.

\section{Closure and Finiteness Theorem}\label{cf}
In this section, we demonstrate that the closure of a globally definable semianalytic set is a globally definable semianalytic set when the o-minimal structure is polynomially bounded with a proof based on Gabrielov's work \cite[Lemma 1]{Gabrielov}.
\begin{thm}\label{closures}
Consider a polynomially bounded o-minimal structure. Let $X$ be a globally definable semianalytic subset of $\mathbb R^n$. Then, the closure $\overline{X}$ is again a globally definable semianalytic set.
\end{thm}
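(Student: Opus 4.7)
The plan is to reduce first to a single basic piece
\[ S = \{x \in \mathbb{R}^n : f(x) = 0,\ g_1(x) > 0, \ldots, g_l(x) > 0\}, \]
using that closure distributes over finite unions and that finite unions of globally definable semianalytic sets are again of that form. So the task becomes exhibiting $\overline{S}$ in the required form, where $f, g_1, \ldots, g_l$ are definable real analytic functions on $\mathbb{R}^n$.

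Trivially $\overline{S} \subseteq T := \{x : f(x) = 0,\ g_1(x) \ge 0, \ldots, g_l(x) \ge 0\}$, and $T$ itself is globally definable semianalytic (each inequality $g_i \ge 0$ being rewritten as $g_i > 0 \vee g_i = 0$). The containment is generally strict, however --- a point of $T$ need not be a limit of points of $S$ --- so one must carve out the defect $T \setminus \overline{S}$ using further definable analytic conditions. Following Gabrielov's Lemma~1 strategy, I would stratify $\mathbb{R}^n$ by the sign data of $f, g_1, \ldots, g_l$ (and possibly their iterated derivatives), then translate the topological condition ``$x$ is approached by points of $S$'' into a finite conjunction of quantitative inequalities. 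The essential tool at this step is the \L ojasiewicz inequality for definable functions, available in polynomially bounded o-minimal structures: if $h_1, h_2$ are definable continuous functions with $\{h_1 = 0\} \subseteq \{h_2 = 0\}$, then an estimate $|h_2|^N \le C|h_1|$ holds, with the exponent $N$ controllable via the polynomial growth bound. Applying this with $h_1 = f$ against each $g_i$ (and suitable auxiliary definable analytic functions built from them) should yield the desired finite Boolean description.

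The principal obstacle is the passage from local to global. \L ojasiewicz-type arguments are intrinsically local, producing a semianalytic description near each point, whereas a globally definable semianalytic description must be a uniformly finite Boolean combination valid throughout $\mathbb{R}^n$. Polynomial boundedness is essential precisely to uniformize the exponents and constants across $\mathbb{R}^n$ and to prevent the sort of oscillatory behaviour that would force infinitely many local pieces; as the sine curve example of the introduction illustrates, without such growth control a global finite analytic description of the closure may fail to exist altogether.
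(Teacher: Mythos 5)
Your proposal correctly identifies the setting (reduce to one basic set, invoke Gabrielov's strategy, use polynomial boundedness to get uniform \L ojasiewicz exponents), but it stops exactly at the point where the actual work of the proof begins. The sentence ``stratify $\mathbb R^n$ by the sign data of $f,g_1,\ldots,g_l$ and translate the topological condition into a finite conjunction of quantitative inequalities'' is not a construction: it does not tell us \emph{which} finitely many globally defined definable analytic functions will appear in the Boolean combination describing $\overline{X}$, and that is the whole content of the theorem. Your final paragraph then names the local-to-global passage as ``the principal obstacle'' and asserts that polynomial boundedness resolves it, but never says how. That is a genuine gap, not a detail.

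The missing mechanism in the paper's proof is the following two-step device. First, one uses the uniform \L ojasiewicz exponent theorem of van den Dries--Miller (applied not to $f$ against the $g_i$, but to auxiliary definable functions such as $\delta(x,t)=\max_{y\in X,\ \|x-y\|=t}\min_i g_i(y)$ in the variable $t$, uniformly over $x$) to find integers $q,q'$ such that membership of $x$ in $\overline{X}$ is detected by the sets $\{y : g_i(y)>c\|x-y\|^q,\ |f(y)|\le c'\|x-y\|^{q'}\}$. Second --- and this is the step your sketch lacks entirely --- one replaces $f$ and the $g_i$ by their Taylor polynomials of orders $q'$ and $q$ at $x$. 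The resulting set $T_{x,c,c'}$ is, for fixed $x$, a \emph{semialgebraic} set in $y$ whose defining polynomials have coefficients equal to the partial derivatives of $f$ and $g_i$ at $x$ up to bounded order. By Tarski's quantifier elimination, the condition ``$x\in\overline{T_{x,c,c'}}$'' is a semialgebraic condition on those coefficients; since the coefficients are themselves definable real analytic functions of $x$ on all of $\mathbb R^n$, this yields the desired global finite Boolean combination. Without the Taylor truncation at uniformly bounded order and the quantifier-elimination step on the jet coordinates, there is no route from the pointwise closure criterion to a globally definable semianalytic description, and your argument does not close. (A secondary issue: the \L ojasiewicz inequality as you state it, for arbitrary definable continuous $h_1,h_2$ on all of $\mathbb R^n$, is false without compactness or a uniformity-in-families formulation; the paper is careful to use the van den Dries--Miller family version for exactly this reason.)
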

\begin{proof}
We may assume that 
\begin{center}
$X=\{x \in \mathbb R^n\;|\;f(x)=0, g_1(x)>0, \ldots, g_k(x)>0\}$ 
\end{center}
without loss of generality. Here, $f(x)$ and $g_1(x), \ldots, g_k(x)$ are definable real analytic functions on $\mathbb R^n$.
\medskip

There exists a positive integer $q>0$ such that the closure of the set 
\begin{center}
$Y_{x,c}=\{y \in X\;|\;g_1(y) > c\|x-y\|^q, \ldots,g_k(y) > c\|x-y\|^q  \} \subset \mathbb R^n$
\end{center}
contains $x$ if and only if $x \in \overline{X}$ for any positive real number $c$. Here, $\|x-y\|$ denotes the Euclidean norm of element $x-y$ in $\mathbb R^n$. Obviously, for any positive integer $q$, the condition that $x \in X$ implies that $x \in Y_{x,c}$; likewise, $x \not\in \overline{Y_{x,c}}$ if $x \not\in \overline{X}$ because $Y_{x,c} \subset X$.

Finally, we consider the case in which $x \in \partial X$, where $\partial X$ denotes the boundary of $X$ defined by $\partial X = \overline{X} \setminus X$. Consider the positive definable function defined by 
\begin{center}
$\delta(x,t)=\displaystyle\max_{y\in X, \|x-y\|=t}\left(\min_{1 \leq i \leq k} g_i(y) \right)$
\end{center}
for any $x \in \partial X$ and any sufficiently small $t>0$. We show that the function $\delta(x,t)$ is well-defined and positive. Set $g(y)=\displaystyle\min_{1 \leq i \leq k} g_i(y)$, which is a continuous definable function on $\mathbb R^n$ and positive on $X$. Set $X_{x,t}=\{y \in X\;|\; \|x-y\|=t\}$ for $x \in \partial X$ and a sufficiently small $t>0$; then, $g(y)=0$ for any $y \in \overline{X_{x,t}} \setminus X_{x,t}$. Since $\overline{X_{x,t}}$ is compact, the restriction of $g$ to $\overline{X_{x,t}}$ has the maximum value, which is positive because $g$ is positive on $X_{x,t}$. Hence, the restriction of $g$ to $\overline{X_{x,t}}$ has the positive maximum value at a point in $X_{x,t}$. We have shown that the function $\delta(x,t)$ is well-defined and positive for any $x \in \partial X$ and any sufficiently small $t>0$.

There exists $q \in \mathbb N$ with $\delta(x,t) > ct^q$ for any positive real number $c$ and any sufficiently small $t>0$ by \cite[Theorem 1.4]{vdDM3}. Thus, there exists $y_t \in Y_{x,c}$ with $\|x-y_t\|=t$ for any sufficiently small positive real number $t$. Therefore, the closure of $Y_{x,c}$ contains $x$ if $x \in \partial X$.
\medskip

Let $\Hat{g}_i:\mathbb R^n \times \mathbb R^n \rightarrow \mathbb R$ be the definable function such that $\Hat{g}_i(x,y)$ is the Taylor expansion of $g_i(y)$ of order $q$ at $x$. That is, we set 
\begin{center}
$\Hat{g}_i(x,y)= \displaystyle \sum_{\mu \in (\mathbb N \cup \{0\})^n, |\mu| \leq q} \dfrac{\partial^{|\mu|}g_i(x)}{\partial^{\mu_1}X_1 \cdots \partial^{\mu_n}X_n} 
(y_1-x_1)^{\mu_1} \cdots (y_n-x_n)^{\mu_n}$
\end{center}
for $\mu=(\mu_1, \ldots, \mu_n)$, $x=(x_1, \ldots, x_n) \in \mathbb R^n$ and $y=(y_1, \ldots, y_n) \in \mathbb R^n$. Here, $|\mu|=\sum_{i=1}^n \mu_i$ and $X_1, \ldots, X_n$ are the coordinate functions of $\mathbb R^n$. Set 
\begin{center}
$S_{x,c}=\{y \in \mathbb R^n\;|\; \Hat{g}_i(x,y) > 0 \text{ and }\Hat{g}_i(x,y) \geq c\|y-x\|^q \text{ for all }i=1 ,\ldots, k \}$. 
\end{center}
We show that the closure of the set 
\begin{center}
$S'_{x,c}:=S_{x,c} \cap \{y \in \mathbb R^n\;|\; f(y)=0\}$ 
\end{center}
contains $x$ if and only if $x \in \overline{X}$. 

Assume first that $x \in \overline{X}$. Therefore, $x \in \overline{Y_{x,2c}}$ and there exists a sequence $\{y_\nu\} \subset Y_{x,2c}$ with $y_\nu \to x$. Since $\Hat{g}_i(x,y)$ is the Taylor expansion of $g_i(y)$ of the order $q$ at $x$, there exists $C>0$ with $|g_i(x)-\Hat{g}_i(x,y)| \leq C\|x-y\|^{q+1}$ in a neighborhood of $x$. If $y$ is sufficiently close to $x$, $c\|x-y\|^q > C\|x-y\|^{q+1}$. Hence, we have $ \Hat{g}_i(x,y) \geq c\|y-x\|^q$ if $y \in Y_{x,2c}$ and $y$ is sufficiently close to $x$, and $y_\nu \in S'_{x,c}$ for sufficiently large $\nu>0$. Therefore, we have $x \in \overline{S'_{x,c}}$. We next consider the case in which $x \in \overline{S'_{x,c}}$. We can show that $x \in  \overline{Y_{x,\frac{1}{2}c}}$ in the same way as above. Hence, $x \in \overline{X}$.
\medskip

We next demonstrate that there exists a positive integer $q>0$ such that the closure of the set 
\begin{center}
$Y'_{x,c,c'}=\{y \in S_{x,c}\;|\;|f(y)| \leq c'\|x-y\|^{q'} \}$
\end{center}
contains $x$ if and only if $x \in \overline{X}$ for any positive real number $c'$. Note that $S'_{x,c} \subset Y'_{x,c,c'}$, so the closure of $Y'_{x,c,c'}$ contains $x$ if $x \in \overline{X}$. We next show that $x \not\in \overline{Y'_{x,c,c'}}$ if $x \not\in \overline{X}$, a claim that is obvious in the case in which $f(x) \not=0$. We assume that $f(x)=0$. If $x \not\in \overline{S_{x,c}}$, then we have $x \not\in \overline{S'_{x,c}}$, $x \not\in \overline{X}$, and $x \not\in \overline{Y'_{x,c,c'}}$, in which case the claim is also true. It is also obvious that the closure of $Y'_{x,c,c'}$ does not contain $x$ if $g_i(x)<0$ for some $1 \leq i \leq k$. Therefore, we have only to consider the case in which $x \in \overline{S_{x,c}}$ and $f(x)= \displaystyle\min_{1 \leq i \leq k} g_i(x)=0$. 

Set $V=\{x \in \mathbb R^n\;|\;f(x)= \displaystyle\min_{1 \leq i \leq k} g_i(x)=0, x \in \overline{S_{x,c}} \} \setminus \overline{X}$. $V$ is a definable set. Consider the definable function defined by  
\begin{center}
$\delta'(x,t)=\displaystyle  \min_{y \in S_{x,c}, \|y-x\| = t} |f(y)|$ 
\end{center}
for any $x \in V$ and sufficiently small $t>0$. Set $S_{x,c,t} = \{y \in S_{x,c}\;|\;\|y-x\|=t\} = \{y \in \mathbb R^n\;|\;\|y-x\|=t, \Hat{g}_i(x,y) \geq c\|y-x\|^q\ (\forall i)\}$, which is a compact set. Because $|f(y)|$ is continuous, the restriction of $|f(y)|$ to $S_{x,c,t}$ has the minimum value. We have shown that function $\delta'(x,t)$ is well-defined. We next establish that $\delta'(x,t)$ is positive for any $x \in V$ and sufficiently small number $t>0$ by assuming the contrary. There exists $x \in V$ such that $\delta'(x,t)=0$ for any sufficiently small $t>0$. Then, there also exists a sequence $\{y_\nu\} \subset S_{x,c}$ with $y_\nu \to x$ and $f(y_\nu)=0$, which means that $y_\nu \in S'_{x,c}$. We now have $x \in \overline{S'_{x,c}}$ and $x \in \overline{X}$, which contradicts the assumption that $x \not\in \overline{X}$.

We can choose a positive integer $q'>0$ with $\delta'(x,t) > c' t^{q'}$ for any $x \in V$, any positive real number $c'$, and any sufficiently small $t>0$ by \cite[Theorem 1.4]{vdDM3}. Therefore, the closure of $Y'_{x,c,c'}$ does not contain $x$ if $x \in V$. 
\medskip

Let $\Hat{f}:\mathbb R^n \times \mathbb R^n \rightarrow \mathbb R$ be the definable function such that $\Hat{f}(x,y)$ is the Taylor expansion of $f(y)$ of order $q'$ at $x$. We can show that the closure of set 
\begin{center}
$T_{x,c,c'}=\{y \in S_{x,c}\;|\;|\Hat{f}(x,y)| \leq c' \|y-x\|^{q'}\}$
\end{center}
contains $x$ if and only if $x \in \overline{X}$, as in the case of $S'_{x,c}$. (We omit this proof.)
\medskip

For a fixed $x$, $T_{x,c,c'}$ is a semialgebraic set. According to the quantifier elimination theorem \cite[Proposition 5.2.2]{BCR}, \cite{Tarski}, the condition that the closure of $T_{x,c,c'}$ contains $x$ is equivalent to a semialgebraic condition on coefficients of polynomials in the variable $y$ defining $T_{x,c,c'}$. The coefficients are the partial derivatives of $f$ and $g_i$, which are definable and real analytic. Hence, $\overline{X}$ is a globally definable semianalytic set.
\end{proof}

In semialgebraic geometry, a closed (resp. open) set is a finite union of basic closed (resp. open) sets \cite[Theorem 2.7.2]{BCR}. A similar finiteness theorem follows:
\begin{prop}[Finiteness Theorem]\label{finite}
Consider a polynomially bounded o-minimal structure. An open {\rm{(}}resp. closed{\rm{)}} globally definable semianalytic set $S$ takes the form 
\begin{center}
$S= \displaystyle\bigcup_{i=1}^k \bigcap_{j=1}^l \{x \in \mathbb R^n\;|\; f_{ij}(x)> \text{{\rm{(}}resp.} \geq \text{{\rm{)}} } 0\}$
\end{center}
Here, $f_{ij}(x)$ are definable real analytic functions on $\mathbb R^n$. 
\end{prop}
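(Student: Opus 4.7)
The plan is to deduce the open and closed cases from each other by complementation, and then to establish the closed case by feeding the description from Theorem \ref{closures} into the classical semialgebraic finiteness theorem.

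\emph{Duality.} The class of globally definable semianalytic sets is closed under finite boolean operations, since $\{f=0\}^c = \{f>0\}\cup\{-f>0\}$ and $\{g>0\}^c = \{g=0\}\cup\{-g>0\}$. Granting the closed case, any open $S$ in the class has $S^c$ closed in the class, so one may write $S^c = \bigcup_i \bigcap_j \{h_{ij}\geq 0\}$; De Morgan and distribution then give
\[
S = \bigcap_i \bigcup_j \{-h_{ij} > 0\} = \bigcup_\phi \bigcap_i \{-h_{i,\phi(i)} > 0\},
\]
where $\phi$ ranges over the relevant choice functions. Hence it is enough to prove the closed case.

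\emph{Closed case via Theorem \ref{closures}.} Let $S$ be closed, so $S = \overline{S}$. The proof of Theorem \ref{closures} produces a uniform description $S = \{x \in \mathbb R^n : \Phi(D(x))\}$, where $D\colon\mathbb R^n \to \mathbb R^N$ is a tuple of definable real analytic functions (the partial derivatives of $f$ and the $g_i$ up to the orders $q, q'$ appearing in that proof), and $\Phi$ is a semialgebraic condition on $\mathbb R^N$ arising from the Tarski-Seidenberg description of ``$x \in \overline{T_{x,c,c'}}$'' on the coefficients. Thus $S = D^{-1}(W)$ for the semialgebraic set $W := \{u \in \mathbb R^N : \Phi(u)\}$. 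Assuming $W$ is closed in $\mathbb R^N$, the classical semialgebraic finiteness theorem \cite[Theorem 2.7.2]{BCR} gives $W = \bigcup_i \bigcap_j \{p_{ij}\geq 0\}$ for polynomials $p_{ij}\in \mathbb R[u]$; pulling back by $D$ yields
\[
S = \bigcup_i \bigcap_j \{x \in \mathbb R^n : p_{ij}(D(x)) \geq 0\},
\]
and each $p_{ij}\circ D$ is a polynomial combination of definable real analytic functions, hence itself definable and real analytic. This is the required form.

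\emph{Main obstacle.} The crux is to arrange that $W$ is closed in $\mathbb R^N$. The condition $\Phi$ built in Theorem \ref{closures} encodes ``$x$ lies in the closure of $T_{x,c,c'}$'', which is morally a closed condition on the coefficients. A direct diagonal argument should confirm it: given $D_n \to D_0$ with $x \in \overline{T(D_n)}$ for every $n$, choose $y_n \in T(D_n)$ with $\|y_n - x\| < 1/n$, and use the continuity of the Taylor polynomials in $D$ together with the fact that, for $y \neq x$, the defining inequalities of $T_{x,c,c'}$ are non-strict, to conclude $x \in \overline{T(D_0)}$. If this direct verification runs into technical subtleties --- for example because $\widehat{g}_i(x,y) > 0$ can degenerate in the limit $y \to x$ --- one can enlarge the tuple $D$ to include the values $f(x)$ and $g_i(x)$ themselves, so that sign information is part of $D$ and the corresponding semialgebraic set on the enlarged coefficient space is manifestly closed. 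Once $W$ is closed, the rest of the argument is a routine pullback from the semialgebraic setting.
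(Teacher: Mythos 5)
Your reduction of the open case to the closed case by complementation and De Morgan is fine, but the closed case itself is not established, and the gap is exactly the one you flag as the ``main obstacle'': the semialgebraic set $W=\{u\in\mathbb R^N : \Phi(u)\}$ is genuinely not closed, and neither of your proposed repairs works. A minimal example: take $n=1$, no equation $f$, one function $g$, and $q=1$, so that (after translating $x$ to the origin) the condition $\Phi$ reads ``$0\in\overline{\{y : u_0+u_1y>0 \text{ and } u_0+u_1y\geq c|y|\}}$''. This holds precisely when $u_0>0$, or $u_0=0$ and $|u_1|\geq c$; the resulting $W$ fails to be closed at $(0,0)$. Enlarging the tuple $D$ by the values $g_i(x)$ adds nothing, since those are already the order-zero Taylor coefficients; the diagonal argument fails because a witness $y_n\in T(D(x_n))$ certifies the inequalities for the coefficients $D(x_n)$, not for the limit $D(x_0)$ (the strict inequality $\widehat g_i>0$ and the lower bound $\widehat g_i\geq c\|y\|^q$ can both be destroyed in the limit). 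Nor may you replace $W$ by $\overline W$: in the example $\overline W=\{u_0\geq 0\}$, and for $g(x)=-x^2$ one gets $D^{-1}(\overline W)=\{0\}$ while $S=\overline{\{g>0\}}=\emptyset$. Since the image of $D$ is definable but not semialgebraic, there is no formal device for trading $W$ for a closed semialgebraic set with the same preimage; some analytic input beyond Theorem~\ref{closures} is required.

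The paper argues in the opposite direction (it proves the open case and gets the closed case by complement) and supplies that missing input via a \L ojasiewicz inequality rather than a pullback. For each basic piece $S'=\{f=0,\ g_1>0,\dots,g_m>0\}$ of the open set $S$, it normalizes $f,g_i$ to be bounded, transports everything to the closed unit ball by a Nash diffeomorphism so as to work on a compact set, and applies the \L ojasiewicz inequality of the polynomially bounded structure to the functions $G=\prod_i(|G_i|+G_i)$ and $F$ on the compact complement of the image of $S$; this yields $g^N\leq\widetilde c\,|f|$ on $\mathbb R^n\setminus S$ and hence an open basic set $S''=\{\widetilde c^{\,2}f^2<(2^m\prod_i g_i)^{2N},\ g_1>0,\dots,g_m>0\}$ squeezed between $S'$ and $S$, with the equation $f=0$ eliminated; induction on the number of equations finishes. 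You should rebuild your proof of the closed (or open) case along these lines; the finiteness theorem is not a formal consequence of the closure theorem.
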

\begin{proof}
We only prove the case in which $S$ is open because by taking the complement, the closed case is clear. \par 
Set $S$ is a finite union of global semianalytic sets of the form
\begin{center}
$S'=\{x \in \mathbb R^n\;|\;f(x)=0, g_1(x)>0, \ldots, g_m(x)>0\}$.
\end{center}
Here, $f(x)$ and $g_1(x),\ldots,g_m(x)$ are definable real analytic functions on $\mathbb R^n$. Let $\|x\|$ denote the Euclidean norm of $x \in \mathbb R^n$. We may assume that $|f(x)| \leq \frac{1}{\sqrt{1+\|x\|^2}}$ for any $x \in \mathbb R^n$ by replacing $f(x)$ with the bounded function $\frac{f(x)}{\sqrt{(1+f(x)^2)(1+\|x\|^2)}}$ if necessary. We may also assume that $|g_i(x)| \leq \frac{1}{\sqrt{1+\|x\|^2}}$ for all $1 \leq i \leq m$.

Let $B$ be the open unit ball in $\mathbb R^n$ centered at the origin, i.e., $B=\{x \in \mathbb R^n\;|\; \|x\| < 1\}$. Let $D$ be the closed unit ball in $\mathbb R^n$ centered at the origin, i.e., $D=\{x \in \mathbb R^n\;|\; \|x\| \leq 1\}$. Consider the Nash diffeomorphism $\varphi: \mathbb R^n \rightarrow B$ given by $\varphi(x) = \left( \frac{x_1}{\sqrt{1+\|x\|^2}}, \ldots, \frac{x_n}{\sqrt{1+\|x\|^2}}\right)$ for $x=(x_1, \ldots, x_n)$. Note that $\varphi(S)$ is open in $\mathbb R^n$. The definable functions $F: D \rightarrow \mathbb R$ and $G_i: D \rightarrow \mathbb R$ are defined as follows:
\begin{center}
$F(u)=\left\{
\begin{array}{cc}
f\circ \varphi^{-1}(u) & \text{if } u \in B,\\
0 & \text{otherwise}
\end{array}
\right.
$
$G_i(u)=\left\{
\begin{array}{cc}
g_i\circ \varphi^{-1}(u) & \text{if } u \in B,\\
0 & \text{otherwise.}
\end{array}
\right.
$
\end{center}
We can easily show that $|F(u)| \leq \sqrt{1-\|u\|^2}$ and $|G_i(u)| \leq \sqrt{1-\|u\|^2}$ for any $u \in B$. Hence, functions $F$ and $G_i$ are continuous on $D$. 

Set $g(x)= \prod_{i=1}^m(|g_i(x)|+g_i(x))$; in this case, $g(x)=0$ if $f(x)=0$ and $x \not\in S$. Set $G(u)= \prod_{i=1}^m(|G_i(u)|+G_i(u))$. For the compact set $D \setminus \varphi(S)$, we have $G(u)=0$ if $F(u)=0$. For some positive integer $N$ and $\widetilde{c}>0$, by the \L ojasiewicz inequality \cite[5.4]{Miller2}, $G^N \leq \widetilde{c} |F|$ on $D \setminus \varphi(S)$. Hence, $g^N \leq \widetilde{c} |f|$ on $\mathbb R^n \setminus S$. Then, set $S''=\{\widetilde{c}^2 f^2<(2^m \prod_{i=1}^m g_i)^{2N},g_1>0 ,\ldots, g_m>0\}$ is contained in $S$ and contains $S'$. By replacing $S'$ with $S''$, we can show the proposition inductively. 
\end{proof}

\section{Separation}\label{separation}
In this section, we show that a connected component of a planar globally definable semianalytic set is again a globally definable semianalytic set in the case in which the o-minimal structure is a substructure of the restricted analytic field. We begin by illustrating several lemmas.

A \textit{global analytic subset} of a paracompact Hausdorff real analytic manifold is the common zero set of finite real analytic functions defined on the manifold. A global analytic set $X$ is \textit{irreducible} if it is not the union of two proper global analytic subsets of $X$. (See \cite[Chapter VIII]{ABR} for global analytic sets.)

\begin{lem}\label{simple3}
Consider an o-minimal structure $\Tilde{\mathbb R}$ that admits analytic decomposition. (See \cite{vdDM2} for the definition of analytic decomposition.) Let $X$ be a proper global analytic subset of $\mathbb R^2$ that is definable in $\Tilde{\mathbb R}$. Then, one-dimensional irreducible global analytic subsets of $X$ are definable, and at most a finite number of them exist.
\end{lem}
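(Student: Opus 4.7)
The plan is to use analytic cell decomposition to reduce the problem to a finite combinatorial question on the strata of $X$. First, I apply analytic cell decomposition to $\mathbb R^2$ compatibly with $X$, obtaining a finite partition of $X$ into definable analytic cells. Since $X$ is a proper global analytic subset of $\mathbb R^2$, the identity theorem for real analytic functions forces $\dim X \leq 1$, so $X$ contains no two-dimensional cells; I label the one-dimensional analytic cells of $X$ as $C_1, \ldots, C_N$ and the zero-dimensional cells as $p_1, \ldots, p_M$.

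Now let $Y$ be a one-dimensional irreducible global analytic subset of $X$. For each $i$, the intersection $Y \cap C_i$ is an analytic subset of the connected one-dimensional analytic submanifold $C_i$, so it is either $C_i$ itself or a discrete subset of $C_i$. Setting $I(Y) = \{i : C_i \subset Y\}$, I claim that $Y = \overline{\bigcup_{i \in I(Y)} C_i}$. The inclusion $\supseteq$ is immediate because $Y$ is closed. For the reverse inclusion, irreducibility forbids isolated points in $Y$: otherwise, an isolated $q \in Y$ would give the proper analytic decomposition $Y = \{q\} \cup (Y \setminus \{q\})$. Hence for every $q \in Y$ the analytic germ $Y_q$ is one-dimensional and decomposes into finitely many irreducible curve germs. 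Each such germ lies in the germ $X_q$, which by the frontier property of the cells is the union of the one-dimensional analytic branches of the various $\overline{C_i}$ passing through $q$. Because two distinct irreducible one-dimensional analytic germs in $\mathbb R^2$ intersect in only a discrete set, each branch of $Y_q$ must coincide with one of these $\overline{C_i}$-branches, which forces an arc of $C_i$ into $Y$ and hence $C_i \subset Y$ by analytic continuation along the connected one-dimensional manifold $C_i$. Thus $q \in \overline{C_i} \subset \overline{\bigcup_{i' \in I(Y)} C_{i'}}$, as desired.

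Finally, since $I(Y)$ ranges over subsets of $\{1, \ldots, N\}$, there are at most $2^N$ possible sets $Y$, giving the asserted finiteness. Each $Y = \overline{\bigcup_{i \in I(Y)} C_i}$ is definable because the cells are definable, finite unions of definable sets are definable, and the closure operation preserves definability in any o-minimal structure. The main obstacle is the germ analysis in the middle paragraph: verifying that every irreducible branch of $Y_q$ genuinely coincides with the branch of some $\overline{C_i}$ through $q$ relies on both the frontier property of the analytic cells and the fact that two distinct irreducible one-dimensional analytic germs in $\mathbb R^2$ meet only in a discrete set.
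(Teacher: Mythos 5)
Your proof is correct and follows the same skeleton as the paper's: take an analytic cell decomposition compatible with $X$, show that every one-dimensional irreducible global analytic subset $Y$ of $X$ is determined by which cells it contains, and deduce definability and finiteness from the finiteness of the decomposition. Where you genuinely diverge is in the key step that $Y$ is a union of cells. The paper routes this through the singular locus: if two one-dimensional irreducible global analytic subsets of $X$ meet at $z$, the ring of analytic germs on $X$ at $z$ is not an integral domain, hence not a regular local ring by \cite[Theorem 20.3]{Mat}, so $z$ is a singular point of $X$; since the one-dimensional cells lie in the regular locus, each cell sits inside a single irreducible subset. You instead match the branches of the germ $Y_q$ against the branches of $X_q$ and against the cell-ends at $q$. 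This is more elementary in that it avoids commutative algebra entirely, but it shifts the weight onto exactly the step you flag: that each half-branch of $X_q$ eventually coincides with the end of a single cell needs a short connectedness/o-minimality argument (a half-branch minus $q$ is a connected arc meeting the disjoint cells, and in a small enough neighbourhood it cannot switch cells), and that a one-dimensional analytic germ contained in an irreducible one is equal to it; with those spelled out the argument closes. One further point to tighten: your claim that an irreducible one-dimensional $Y$ has no isolated points uses that $Y\setminus\{q\}$ is again a \emph{global} analytic set when $q$ is isolated, which is true but rests on the Whitney--Bruhat theory of irreducible components of global analytic sets (\cite{ABR}, Chapter VIII). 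You can sidestep this entirely by allowing $I(Y)$ to range over all cells, zero-dimensional ones included, as the paper implicitly does; finiteness (now bounded by $2^{N+M}$) and definability follow just as well.
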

\begin{proof}
Because $\Tilde{\mathbb R}$ admits analytic decomposition, set $X$ is the union of finite cells definable in $\Tilde{\mathbb R}$ that are analytically diffeomorphic to Euclidean spaces. These cells satisfy the following conditions: First, the intersection of two cells is empty. Second, the boundary of a cell is a union of cells of smaller dimension. Furthermore, they are real analytic submanifolds of $\mathbb R^2$ because they are either points, or sets of the forms
\begin{center}
  $\{(x,y) \in \mathbb R^2\;|\;x=a,y\in I\}$ and
    $\{(x,y) \in \mathbb R^2\;|\;y=\xi(x),x\in I\}$, 
 \end{center} 
where $a$ is a constant, $I$ is an open interval in $\mathbb R$, and $\xi(x)$ is a definable real analytic function defined on $I$. Hence, a one-dimensional cell is contained in the set of regular points of $X$.

If two one-dimensional irreducible global analytic subsets of $X$ intersect at point $z$, the ring of analytic function germs on $X$ at $z$ is not an integral domain. By \cite[Theorem 20.3]{Mat}, it is also not a regular local ring. Hence, $z$ is a singular point of $X$. A cell is contained in a single irreducible global analytic subset of $X$. Hence, a one-dimensional irreducible global analytic subset of $X$ is a union of cells and is definable. Furthermore, the number of one-dimensional irreducible global analytic subsets of $X$ is finite.
\end{proof}

\begin{lem}\label{simple}
Consider closed semianalytic set germs $A$ and $B$ of $\mathbb R^n$ at the origin $O$ such that the intersection $A \cap B$ is the origin. There exists an open semialgebraic subset $C$ of $\mathbb R^n$ satisfying the conditions that $A \subset C \cup \{O\}$ and $B \cap \overline{C} = \{O\}$ as set germs at the origin.
\end{lem}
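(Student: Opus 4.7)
My plan is to separate $A$ from $B$ by combining a {\L}ojasiewicz-type lower bound with polynomial (Taylor) approximation of the analytic functions defining $B$.

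First I would reduce to the case where $B$ is a basic closed semianalytic germ. Writing $B = \bigcup_{j=1}^q B_j$ as a finite union of basic closed semianalytic pieces, one has $A \cap B_j \subseteq \{O\}$ as germs. If an open semialgebraic $C_j$ is produced for each $j$ satisfying $A \subset C_j \cup \{O\}$ and $B_j \cap \overline{C_j} \subseteq \{O\}$ as germs, then $C := \bigcap_j C_j$ is open and semialgebraic, contains $A \setminus \{O\}$, and $\overline{C} \cap B \subseteq \bigcup_j (\overline{C_j} \cap B_j) \subseteq \{O\}$ as germs.

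In the basic case, write $B = \{x : k_1(x) \geq 0, \ldots, k_t(x) \geq 0\}$ with $k_i$ real analytic near $O$. Because $A \cap B = \{O\}$ as germs, the continuous semianalytic function $\Phi(x) := -\min_i k_i(x)$ is nonnegative on $A$ in some neighborhood of $O$ and vanishes only at $O$. The classical {\L}ojasiewicz inequality for semianalytic sets, applied to $\Phi$ and $\|\cdot\|$ on the compact set $A \cap \overline{B(O,r_0)}$, yields a constant $c > 0$ and an even positive integer $N$ with $\Phi(x) \geq c\|x\|^N$ for all $x \in A$ near $O$; in particular some index $i$ (depending on $x$) satisfies $k_i(x) \leq -c\|x\|^N$. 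Let $\widetilde{k}_i$ denote the Taylor polynomial of $k_i$ of order $N+1$ at $O$, so that $|k_i(x) - \widetilde{k}_i(x)| \leq M\|x\|^{N+2}$ near $O$ for some $M > 0$. I then define
$$C := \bigcup_{i=1}^t \Bigl\{x \in \mathbb R^n : \widetilde{k}_i(x) < -\tfrac{c}{2}\|x\|^N\Bigr\} \cap \{x \in \mathbb R^n : \|x\| < r\}$$
for a suitably small $r > 0$; this set is open and semialgebraic because $\|x\|^N = (x_1^2 + \cdots + x_n^2)^{N/2}$ is polynomial. For $x \in A$ near $O$ the estimates give $\widetilde{k}_i(x) \leq -c\|x\|^N + M\|x\|^{N+2} \leq -\tfrac{c}{2}\|x\|^N$, so $A \setminus \{O\} \subset C$. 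Conversely, if $y \in B \cap \overline{C}$ with $y \neq O$ were close to $O$, some $i$ would yield $k_i(y) \geq 0$ together with $\widetilde{k}_i(y) \leq -\tfrac{c}{2}\|y\|^N$, forcing $M\|y\|^{N+2} \geq \tfrac{c}{2}\|y\|^N$, impossible for small $\|y\|$.

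The principal obstacle is the {\L}ojasiewicz bound for the non-analytic function $\Phi = -\min_i k_i$ on the semianalytic set $A$. This step is handled by the standard curve selection argument, which reduces it to the one-variable {\L}ojasiewicz bound for $k_i \circ \gamma$ along a real-analytic arc $\gamma \subset A$ approaching $O$ on which a single $k_i$ realizes the minimum; the resulting analytic function of one variable either vanishes identically, forcing $\gamma \subset A \cap B = \{O\}$, or satisfies a polynomial lower bound in the parameter.
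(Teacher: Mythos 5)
Your argument is correct, but it takes a different and more self-contained route than the paper. The paper proceeds in two black-boxed steps: it first invokes \cite[Lemma 2.4]{F} to produce an open \emph{semianalytic} separating germ $C'=\bigcup_i\bigcap_j\{f_{ij}>0\}$, and then invokes \cite[Lemma 2.5]{F}, a stability statement saying that replacing each $f_{ij}$ by any $g_{ij}$ with $f_{ij}-g_{ij}\in\mathfrak m^{\mu}$ preserves the separation, so that truncating the $f_{ij}$ to Taylor polynomials yields a semialgebraic $C$. You instead perturb the description of $B$ rather than of an intermediate separating set: the \L ojasiewicz bound $-\min_i k_i\geq c\|x\|^N$ on $A$ plus an order-$(N+1)$ Taylor truncation of the $k_i$ gives the semialgebraic $C$ directly, and your verification of both inclusions is sound (including the choice of an even $N$ so that $\|x\|^N$ is polynomial). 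What the paper's route buys is that it never needs a normal form for $B$; your route requires writing the closed germ $B$ as a finite union of \emph{basic closed} pieces $\{k_1\geq 0,\ldots,k_t\geq 0\}$, which is the finiteness theorem for semianalytic germs --- true and citable (e.g.\ \cite[Ch.\ VIII]{ABR}, since the ring of convergent power series is excellent), but not free, so you should reference it rather than assert it. Two small points to tidy: the \L ojasiewicz inequality you use is the one for continuous semianalytic (subanalytic) functions on a compact set, e.g.\ \cite{BM}, applied to the pair $(-\min_i k_i,\ \|x\|)$ on $A\cap\overline{B(O,r_0)}$, and it requires $\Phi(O)=0$; in the degenerate case where every $k_i(O)>0$ the germ $B$ is a neighborhood of $O$ and $A$ reduces to $\{O\}$, which should be disposed of separately.
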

\begin{proof}
There exists an open semianalytic set germ $C'$ at the origin with $A \subset C' \cup \{O\}$ and $B \cap \overline{C'} = \{O\}$ by \cite[Lemma 2.4]{F}. Let $C'=\bigcup_{i=1}^l \bigcap_{j=1}^m \{ f_{ij} > 0\}$, where $f_{ij}(x)$ are real analytic functions defined on a neighborhood of the origin. By \cite[Lemma 2.5]{F}, there exists a positive integer $\mu$ such that any semianalytic set germ $C=\bigcup_{i=1}^l \bigcap_{j=1}^m \{ g_{ij} > 0\}$ satisfies the conditions that $A  \subset C \cup \{O\}$ and $B \cap \overline{C} = \{O\}$ as set germs at the origin if $f_{ij}-g_{ij} \in \mathfrak{m}^\mu$, where $\mathfrak{m}$ is the maximal ideal of the ring of real analytic function germs at the origin. We can choose $g_{ij}$ as polynomials and hence, we can choose $C$ as a semialgebraic set.
\end{proof}

\begin{lem}\label{lem1}
Consider an o-minimal substructure $\Tilde{\mathbb R}$ of the restricted analytic field $\mathbb R_{\text{an}}$. Let $C$ be a globally $\Tilde{\mathbb R}$-definable semianalytic subset of $\mathbb R^2$. Let $X$ and $Y$ be $\Tilde{\mathbb R}$-definable subsets of $\mathbb R^2$ with $X \cup Y = C$ and $X \cap Y = \emptyset$. Assume further that there exist a non-zero $\Tilde{\mathbb R}$-definable real analytic function $g$ on $\mathbb R^2$, a finite subset $P$ of $\mathbb R^2$, and finitely many one-dimensional $\mathbb R_{\text{an}}$-definable irreducible global analytic subsets $Z_1,\ldots, Z_m$ of $\mathbb R^2$ such that $\overline{X} \cap \overline{Y} \subset P \cup \left(\bigcup_{i=1}^m Z_i\right)$ and the function $g$ vanishes on $Z_i$ for all $i=1, \ldots, m$. Then $X$ and $Y$ are both globally $\Tilde{\mathbb R}$-definable semianalytic.
\end{lem}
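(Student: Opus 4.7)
The plan is to decompose $X$ along the zero set of $g$ and handle the two pieces separately. Since $g$ vanishes on each $Z_i$, we have $\bigcup_{i=1}^m Z_i \subset \{g = 0\}$, and consequently $\overline{X} \cap \overline{Y} \subset P \cup \{g = 0\}$. Setting $U := \{y \in \mathbb R^2 \mid g(y) \neq 0\}$, the decomposition
\begin{equation*}
X = (X \cap U) \cup (X \cap \{g = 0\})
\end{equation*}
reduces the lemma to showing each of the two pieces is globally $\Tilde{\mathbb R}$-definable semianalytic.

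For the piece $X \cap U$, the overlap $\overline{X \cap U} \cap \overline{Y \cap U}$ lies in the finite set $P \cap U$. At each such point $p$, the germs of $\overline{X}$ and $\overline{Y}$ at $p$ are closed semianalytic (every subanalytic subset of $\mathbb R^2$ is semianalytic) and meet only at $p$, so Lemma~\ref{simple} supplies an open semialgebraic set $D_p$ with $\overline{X} \subset D_p \cup \{p\}$ and $\overline{Y} \cap \overline{D_p} = \{p\}$ as germs at $p$. I plan to combine the defining polynomials of the finitely many $D_p$ with the defining analytic functions of $C$, multiplying by a sufficiently high even power of $g$ to localize each separator to a neighborhood of its point $p$ without altering sign across $\{g = 0\}$, so as to obtain a single globally $\Tilde{\mathbb R}$-definable semianalytic description of $X \cap U$.

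For the piece $X \cap \{g = 0\}$, observe that $\{g = 0\}$ is a one-dimensional $\Tilde{\mathbb R}$-definable set; cell decomposition in $\Tilde{\mathbb R}$ (inherited from $\mathbb R_{\mathrm{an}}$) writes it as a finite disjoint union of definable smooth arcs and isolated points, and the trace of $X$ on each cell is expressible by finitely many sign conditions on $\Tilde{\mathbb R}$-definable analytic functions coupled with the equation $g = 0$. Taking the union of the two global descriptions then yields the required form for $X$, and a symmetric argument handles $Y$.

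The main obstacle will be the global assembly for $X \cap U$: producing a single finite Boolean expression in the allowed class of functions from the finitely many local semialgebraic separators of Lemma~\ref{simple}, the defining functions of $C$, and the function $g$, while ensuring that the localization by powers of $g$ neither captures stray points of $Y$ nor misses points of $X$ near the curves $Z_i$. If multiplication by powers of $g$ proves insufficient, a substitute should be an $\Tilde{\mathbb R}$-definable analytic damping factor vanishing outside a chosen neighborhood of each $p$, constructed along the lines of the approximation results used in the proof of Lemma~\ref{simple}.
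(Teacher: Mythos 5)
There is a genuine gap, and it sits at the heart of your reduction. You claim that $\overline{X \cap U} \cap \overline{Y \cap U}$ lies in the finite set $P \cap U$, but removing $\{g=0\}$ from $X$ and $Y$ does not remove the curves $Z_i$ from the intersection of the closures: the closure of $X \cap U$ is not contained in $U$. Take $g=x_2$, $Z_1=\{x_2=0\}$, $C=\mathbb R^2$, $X=\{x_2\geq 0\}$, $Y=\{x_2<0\}$; then $X\cap U=\{x_2>0\}$, $Y\cap U=\{x_2<0\}$, and $\overline{X\cap U}\cap\overline{Y\cap U}=Z_1$ is one-dimensional. So your decomposition does not reduce matters to the finite-boundary situation, and the entire difficulty of the lemma --- separating $X$ from $Y$ along the one-dimensional sets $Z_i$ --- remains untouched. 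The paper's proof attacks exactly this: it inducts on $m$, and for each $Z_i$ it manufactures an $\Tilde{\mathbb R}$-definable real analytic function $f_1$ vanishing on $Z_1$ to order exactly one at a generic point (by repeatedly applying a transversal directional derivative to $g$, since $g$ itself may vanish to even order on $Z_1$, e.g.\ $g=x_2^2$, and then fail to change sign across it). Splitting by the sign of $f_1$ shrinks $\overline{X}\cap\overline{Y}$ off $Z_1$ except at finitely many points, which is what permits the induction. Nothing in your proposal produces such a sign-changing function.

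Two secondary problems. First, for $X\cap\{g=0\}$ you assert that a cell decomposition yields the required global description ``by finitely many sign conditions on $\Tilde{\mathbb R}$-definable analytic functions''; but definability of the cells does not give global definable semianalyticity --- that gap is the whole subject of the paper, and even in this zero-set case the paper must invoke the $m=0$ argument (Nash embedding into $S^2$, Stone--Weierstrass, and Lemma~\ref{simple} at finitely many boundary points). Second, multiplying a local semialgebraic separator by an even power of $g$ does not localize it to a neighborhood of a point $p\in P\cap U$: $g^{2N}$ is nonnegative everywhere and vanishes only on $\{g=0\}$, so it cannot prevent the separator from capturing stray points of $Y$ far from $p$. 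The paper's mechanism for globalizing the local separators is different: compactify via the Nash map to the sphere, separate at one bad point at a time with Lemma~\ref{simple}, and induct on the number of bad points, using Stone--Weierstrass on the sphere when no bad points remain.
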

\begin{proof}
We simply call a $\Tilde{\mathbb R}$-definable set \textit{definable} in this proof-by-induction on $m$. Set $W=\overline{X} \cap \overline{Y}$.

We first consider the case in which $m=0$. In this case, $W$ is either empty or consists of a finite number of points. Let $q$ be the cardinality of $W$.
We show the lemma by induction on $q$. For the case in which $q=0$, $W$ is empty. Consider the Nash map $\varphi: \mathbb R^2 \rightarrow S^2$, defined by $\varphi(x) = \left(\frac{2x_1}{{1+\|x\|^2}}, \frac{2x_2}{{1+\|x\|^2}}, \frac{1-\|x\|^2}{{1+\|x\|^2}}\right)$. Here, $S^2$ is the sphere $\{(x_1,x_2,x_3) \in \mathbb R^3\;|\; x_1^2+x_2^2+x_3^2=1\}$ in $\mathbb R^3$ and $x=(x_1, x_2)$. It is a Nash diffeomorphism to the image $S^2\setminus \{S\}$, where $S=(0,0,-1)$. Set $X_1=\varphi(X)$ and $Y_1=\varphi(Y)$. Thus, $W'=\overline{X_1} \cap \overline{Y_1}$ is either empty or consists of a single point $S$.

In the case in which $W'$ is empty, let $d_{X_1}, d_{Y_1}:\mathbb R^3 \rightarrow \mathbb R$ be the distance functions to the closures $\overline{X_1}$ and $\overline{Y_1}$, respectively. Define a continuous definable function $d:\mathbb R^3 \rightarrow \mathbb R$ by $d(x)=\dfrac{d_{Y_1}(x)-d_{X_1}(x)}{d_{X_1}(x)+d_{Y_1}(x)}$; then, the restriction of $d$ to $X_1$ is $1$ and the restriction of $d$ to $Y_1$ is $-1$. The restriction of $d$ to $S^2$ is a continuous function defined on the compact set $S^2$. There exists a polynomial function $P: \mathbb R^3 \rightarrow \mathbb R$ satisfying the condition that $|P(x)-d(x)| < \frac{1}{2}$ on $S^2$ by the Stone-Weierstrass theorem. We now have $C \cap \{x \in \mathbb R^2\;|\;P(\varphi(x))>0\}=X$. Hence, $X$ is a globally definable semianalytic set. We can also show that $Y$ is a globally definable semianalytic set.

We next consider the case in which $W'$ consists of a single point. Since the o-minimal structure is a substructure of the restricted analytic field $\mathbb R_{\text{an}}$, sets $\overline{X_1}$ and $\overline{Y_1}$ are both definable in $\mathbb R_{\text{an}}$. It is subanalytic by the definition of the restricted analytic field $\mathbb R_{\text{an}}$. Hence it is semianalytic by \cite[Theorem 6.1]{BM}. Therefore, there exists an open semialgebraic subset $V_1$ of $\mathbb R^3$ such that $X_1 \subset V_1$ and $Y_1 \cap \overline{V_1} = \emptyset$ in a neighborhood of the point $S$ by Lemma \ref{simple}. Set $V = \varphi^{-1}(V_1)$, then it is a semialgebraic set. There exists a positive real number $R$ such that $C \cap \{x \in \mathbb R^2\;|\; \|x\|>R\} \cap V = X \cap \{x \in \mathbb R^2\;|\; \|x\|>R\}$. Hence $X \cap \{x \in \mathbb R^2\;|\; \|x\|>R\}$ is a globally definable semianalytic set. Applying the lemma for the case where $W'$ is empty to the set $C \cap \{x \in \mathbb R^2\;|\; \|x\|\leq R\}$, we can show that $X \cap \{x \in \mathbb R^2\;|\; \|x\|\leq R\}$ is also globally definable semianalytic. We have shown that $X=(X \cap \{\|x\|\leq R\}) \cup (X \cap \{\|x\|> R\})$ is globally definable semianalytic. We have finished the proof of the lemma for $m=0$ and $q=0$.
\medskip

We next consider the case in which $m=0$ and $q>0$. Let $p$ be a point in $W$. Remark that $X$ and $Y$ are semianalytic sets because a connected component of a seminalaytic set is semianalytic by \cite[Corollary 2.7, Corollary 2.8]{BM}. We can construct an open semialgebraic set $U$ with $X \subset U$ and $Y \cap \overline{U} = \emptyset$ in a neighborhood of $p$ by Lemma \ref{simple}. The cardinality of the set $\overline{X \cap U} \cap \overline{Y \cap U}$ is smaller than that of $W$. The sets $X \cap U$ and $Y \cap U$ are globally definable semianalytic set by induction, as are $X \setminus U$ and $Y \setminus U$. Hence, $X = (X \cap U) \cup (X \setminus U)$ and set $Y$ are globally definable semianalytic sets, finishing the proof of the lemma for $m=0$.
\medskip

We finally consider the case in which $m>0$. Let $z$ be an arbitrary regular point of $Z_1$. Let $\mathcal O_{z}$ be the ring of real analytic function germs on $\mathbb R^2$ at  $z$. Let $\mathcal I_{Z_1,z}$ be the prime ideal of $\mathcal O_{z}$ of the germs of real analytic functions  vanishing on $Z_1$ at $z$ and is a principal ideal by \cite[Theorem 20.1, Theorem 20.3]{Mat}. Assume that there exists an $\Tilde{\mathbb R}$-definable (resp. $\mathbb R_{\text{an}}$-definable) real analytic function $\psi$ on $\mathbb R^2$ with $\psi \in \mathcal I_{Z_1,z}^k$ for some positive integer $k$. Then, we can easily demonstrate that there exists an $\Tilde{\mathbb R}$-definable (resp. $\mathbb R_{\text{an}}$-definable) real analytic function $\psi'$ on $\mathbb R^2$ with $\psi' \in \mathcal I_{Z_1,z}^{k-1}$. In fact, let $x_1,x_2$ be the coordinate functions of $\mathbb R^2$. There exist real numbers $a$ and $b$ such that vector $(a,b)$ is neither zero nor parallel to the tangent line $T_{z}Z_1$ of $Z_1$ at $z$.
Set $\psi'=a  \dfrac{\partial \psi}{\partial x_1} + b \dfrac{\partial \psi}{\partial x_2}$. Thus, $\psi' \in \mathcal I_{Z_1,z}^{k-1}$. 

Fix a regular point $z_1$ of $Z_1$. Let $A$ be the ring of real analytic functions on $\mathbb R^2$ definable in $\mathbb R_{\text{an}}$. Let $\mathfrak{m}$ be the maximal ideal of $A$ given by $\mathfrak{m}=\{f\in A\;|\; f(z_1)=0\}$. The ring $A_{\mathfrak{m}}$ is a two-dimensional regular local ring by \cite[Remark 2, Proposition 5]{FS} and its proof. Let $\mathfrak{p}$ be the prime ideal of $A$ given by $\mathfrak{p}=\{f \in A\;|\; f(x)=0 \ \text{ for all }x \in Z_1\}$. Then, $I=\mathfrak{p}A_{\mathfrak{m}}$ is a principal ideal by \cite[Theorem 20.1, Theorem 20.3]{Mat}. Let $h_1 \in \mathfrak{p}$ be the generator of $I$. As a corollary of the above claim, we have $h_1 \in \mathcal I_{Z_1,z_1} \setminus \mathcal I_{Z_1,z_1}^2$. There also exists an $\Tilde{\mathbb R}$-definable real analytic function $f_1 \in \mathfrak{p}$ with $f_1 \in \mathcal I_{Z_1,z_1} \setminus \mathcal I_{Z_1,z_1}^2$ by the same claim, because the $\Tilde{\mathbb R}$-definable function $g$ is in $\mathcal I_{Z_1,z_1}$. Since $h_1$ is a generator of $I$, there exist $v \in A$ and $w \in A \setminus \mathfrak{m}$ with $wf_1 = vh_1$. We have $v \not\in \mathfrak{p}$ and $h_1,f_1 \in \mathfrak{p}\setminus\mathfrak{p}^2$ because $f_1, h_1 \in \mathcal I_{Z_1,z_1} \setminus \mathcal I_{Z_1,z_1}^2$.

Set $W_1 = \overline{\{f_1>0\} \cap X} \cap \overline{\{f_1>0\} \cap Y}$. Consider an arbitrary regular point $z \in Z_1$ with $w(z) \not=0$. Let $h_z$ be the generator of $\mathcal I_{Z_1,z}$; then, $f_1=v_zh_z$ for some $v_z \in \mathcal O_{z} \setminus \mathcal I_{Z_1,z}$ because $v \not\in \mathfrak{p}$, $h_1 \in \mathfrak{p}\setminus\mathfrak{p}^2$ and $Z_1$ is an irreducible analytic set. If $v_z(z)\not=0$, then $W_1$ is empty in a neighborhood of $z$. Set $T=Z_1 \cap \left( v^{-1}(0) \cup \overline{h_1^{-1}(0) \setminus Z_1} \cup w^{-1}(0) \right) \cup \operatorname{Sing}(Z_1)$, where $\operatorname{Sing}(Z_1)$ denotes the singular locus of $Z_1$. We can easily show that $\operatorname{Sing}(Z_1)$ is a finite set as in Lemma \ref{simple3}. Thus, we have $W_1 \cap Z_1 \subset T$. The set $T$ is an $\mathbb R_{\text{an}}$-definable set of dimension smaller than one. Hence, there exists a finite subset $P' \subset \mathbb R^2$ with $W_1 \subset P' \cup \left(\bigcup_{i=2}^m Z_i\right)$. Both $\{f_1>0\} \cap X$ and $\{f_1>0\} \cap Y$ are globally definable semianalytic sets by induction. Using the same reasoning, $\{f_1<0\} \cap X$ and $\{f_1<0\} \cap Y$ are globally definable semianalytic sets. By the lemma for $m=0$, $\{f_1=0\} \cap X$ and $\{f_1=0\} \cap Y$ are also globally definable semianalytic sets. We have therefore shown that $X$ and $Y$ are globally definable semianalytic sets.
\end{proof}

The following theorem is the main theorem in this section.

\begin{thm}\label{connected}
Consider an o-minimal substructure $\Tilde{\mathbb R}$ of the restricted analytic field. A connected component of a globally definable semianalytic subset of $\mathbb R^2$ is also a globally definable semianalytic set.
\end{thm}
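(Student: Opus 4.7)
The plan is to reduce the theorem to Lemma \ref{lem1} by taking $Y = C \setminus X$, where $C$ is the given globally $\Tilde{\mathbb R}$-definable semianalytic set and $X$ is the connected component in question. I would first observe that, because definable sets in an o-minimal structure have finitely many connected components (each itself definable), $X$ and $Y$ are $\Tilde{\mathbb R}$-definable. Since $X$ is a connected component of $C$ it is both open and closed in $C$, so $\overline{X} \cap \overline{Y} \subset \overline{C} \setminus C$.

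The main construction is to produce a single non-zero $\Tilde{\mathbb R}$-definable real analytic function $h$ on $\mathbb R^2$ with $\overline{C} \setminus C \subset \{h = 0\}$. Write $C$ as a finite union of basic pieces $P_i = \{f_i = 0,\ g_{i1} > 0, \ldots, g_{ik_i} > 0\}$ with definable real analytic $f_i, g_{ij}$. After discarding empty pieces (those in which some $g_{ij}$ is identically zero), I would set
\[
h_i =
\begin{cases}
 f_i & \text{if } f_i \not\equiv 0, \\
 g_{i1} g_{i2} \cdots g_{ik_i} & \text{if } f_i \equiv 0,
\end{cases}
\]
and take $h = \prod_i h_i$. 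Continuity shows that $\overline{P_i} \setminus P_i$ lies in $\{f_i = 0\}$ in the first case and in $\{g_{i1} \cdots g_{ik_i} = 0\}$ in the second, so $\overline{P_i} \setminus P_i \subset \{h_i = 0\}$, whence $\overline{C} \setminus C \subset \bigcup_i (\overline{P_i} \setminus P_i) \subset \{h = 0\}$. Each surviving $h_i$ is non-zero, so $h$ is non-zero.

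Next I would apply Lemma \ref{simple3} within $\mathbb R_{\text{an}}$, which admits analytic decomposition, to the proper global analytic subset $\{h = 0\}$ (it is $\mathbb R_{\text{an}}$-definable since $h$ is). This produces finitely many one-dimensional $\mathbb R_{\text{an}}$-definable irreducible global analytic components $Z_1, \ldots, Z_m$ of $\{h = 0\}$; any remaining points form a finite set $P$ of isolated points, since $\mathbb R_{\text{an}}$-definable sets have only finitely many connected components. By construction $h$ vanishes on every $Z_i$, and $\overline{X} \cap \overline{Y} \subset \{h = 0\} = P \cup \bigcup_{i=1}^m Z_i$. Taking $g = h$, all hypotheses of Lemma \ref{lem1} are satisfied, and that lemma yields the conclusion.

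The main obstacle I anticipate is the clean construction of the single function $h$ above: the pieces of $C$ may be of mixed type (equality pieces with $f_i \not\equiv 0$ versus purely open pieces with $f_i \equiv 0$), and $h$ must be simultaneously non-zero, $\Tilde{\mathbb R}$-definable, real analytic, and vanish on $\overline{C} \setminus C$. Once $h$ is in place, the remaining work is essentially bookkeeping: Lemma \ref{simple3} supplies the $Z_i$'s, and Lemma \ref{lem1} does the rest of the heavy lifting.
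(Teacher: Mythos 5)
Your proposal is correct and takes essentially the same route as the paper: reduce to Lemma \ref{lem1} via Lemma \ref{simple3} by exhibiting a non-zero $\Tilde{\mathbb R}$-definable real analytic function whose zero set contains $\overline{X}\cap\overline{Y}$. The paper instead first reduces to a single basic piece and splits into cases according to the dimension of $\overline{X}\cap\overline{Y}$ (using curve selection to place the frontier in $\{\prod_i g_i=0\}$ when $f\equiv 0$), but your direct construction of $h$ as a product over the pieces, together with the observation that components are open and closed in $C$, accomplishes the same thing.
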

\begin{proof}
Let $C$ be a globally definable semianalytic subset of $\mathbb R^2$. We may assume that $C$ is of the form
\begin{center}
$ \{x \in \mathbb R^n\;|\;f(x)=0,g_{1}(x)>0, \ldots, g_{l}(x)>0 \}$, 
\end{center}
where $f$ and $g_{j}$ are definable real analytic functions on $\mathbb R^n$ for any $1 \leq j \leq l$. Let $X$ be a connected component of $C$. Set $Y=C \setminus X$. Both $X$ and $Y$ are definable because a connected component is definable by \cite[Proposition 2.18]{vdD}. We show that $X$ is a globally definable semianalytic set. Set $W=\overline{X} \cap \overline{Y}$. The theorem follows from Lemma \ref{lem1} if $W$ is either empty or of dimension zero. We may assume that $W$ is one-dimensional. The boundary of a definable set is of a dimension smaller than that of the original definable set by \cite[Theorem 4.1.8]{vdD}. Hence, $X$ and $Y$ are both two-dimensional. We may assume that the real analytic function $f$ is identically $0$.

We show that $X$ and $Y$ satisfy the conditions of Lemma \ref{lem1}. Set $g(x)=\prod_{i=1}^l g_i(x)$; thus, it is a definable real analytic function. Consider the analytic set $Z=\{x \in \mathbb R^2\;|\; g(x)=0\}$. The restricted analytic field admits analytic decomposition by \cite[Theorem 8.8]{vdDM2}, \cite[Theorem 4.6]{DD} and \cite[Proposition 3.1.14]{Marker}. Hence there exist a finite subset $P$ of $\mathbb R^2$ and finitely many one-dimensional $\mathbb R_{\text{an}}$-definable irreducible global analytic subsets $Z_1,\ldots, Z_m$ of $\mathbb R^2$ such that $Z= P \cup \left(\bigcup_{i=1}^m Z_i\right)$, by Lemma \ref{simple3}. 

We show that $\overline{X} \cap \overline{Y} \subset Z$. Taking a point $z \in \overline{X} \cap \overline{Y}$, there exist a small positive real number $\varepsilon > 0$ and continuous definable functions $\gamma_X:[0, \varepsilon) \rightarrow \mathbb R^2$ and $\gamma_Y:[0, \varepsilon) \rightarrow \mathbb R^2$ such that $z=\gamma_X(0)=\gamma_Y(0)$, $\gamma_X(t) \in X$ and $\gamma_Y(t) \in Y$ for all $0 < t < \varepsilon$ by the curve selection lemma \cite[Corollary 6.1.5]{vdD}. The functions $g_i \circ \gamma_X$ and $g_i \circ \gamma_Y$ are continuous and positive on the open interval $(0,\varepsilon)$. Hence, we have $g_i(z) \geq 0$. If $g_i(z)$ is positive for all $i=1, \ldots,m$, then $z$ is contained in $C$. In this case, there is a continuous path in $C$ connecting $X$ with $Y$, which contradicts the assumption that $X$ is a connected component of $C$. We have shown that $g_i(z)=0$ for some $1 \leq i \leq l$. In particular, we have $\overline{X} \cap \overline{Y} \subset Z$. Since $X$ and $Y$ satisfy the conditions of Lemma \ref{lem1}, $X$ is a globally definable semianalytic set.
\end{proof}

\end{document}